\begin{document}
\title[ Positive solutions ]{positive solutions of nonlinear three-point integral boundary-value problems for second-order differential equations}
\author[F. Haddouchi, S. Benaicha]{Faouzi Haddouchi, Slimane Benaicha}
\address{Faouzi Haddouchi\\
Department of Physics, University of Sciences and Technology of
Oran, El Mnaouar, BP 1505, 31000 Oran, Algeria}
\email{fhaddouchi@gmail.com}
\address{Slimane Benaicha \\
Department of Mathematics, University of Oran, Es-senia, 31000 Oran,
Algeria} \email{slimanebenaicha@yahoo.fr}
\subjclass[2000]{34B15, 34C25, 34B18}
\keywords{Positive solutions; Krasnoselskii's fixed point theorem; Three-point boundary value problems; Cone.}

\begin{abstract}
We investigate the existence of positive solutions to the nonlinear
second-order three-point integral boundary value problem
\begin{equation*} \label{eq-1}
\begin{gathered}
{u^{\prime \prime }}(t)+a(t)f(u(t))=0,\ 0<t<T, \\
u(0)={\beta}u(\eta),\ u(T)={\alpha}\int_{0}^{\eta}u(s)ds,
\end{gathered}
\end{equation*}
where $0<{\eta}<T$, $0<{\alpha}< \frac{2T}{{\eta}^{2}}$,
$0\leq{\beta}<\frac{2T-\alpha\eta^{2}}{\alpha\eta^{2}-2\eta+2T}$ are
given constants. We show the existence of at least one positive
solution if $f$ is either superlinear or sublinear by applying
Krasnoselskii's  fixed point theorem in cones.
\end{abstract}

\maketitle \numberwithin{equation}{section}
\newtheorem{theorem}{Theorem}[section]
\newtheorem{lemma}[theorem]{Lemma} \newtheorem{proposition}[theorem]{%
Proposition} \newtheorem{corollary}[theorem]{Corollary} \newtheorem{remark}[%
theorem]{Remark}
\newtheorem{exmp}{Example}[section]

\section{Introduction}
The study of the existence of solutions of multipoint boundary value
problems for linear second-order ordinary differential equations was
initiated by II'in and Moiseev \cite{Mois}. Then Gupta \cite{Gupt}
studied three-point boundary value problems for nonlinear
second-order ordinary differential equations. Since then, nonlinear
second-order three-point boundary value problems have also been
studied by several authors. We refer the reader to \cite{Chen, Guo,
Han, Liang1, Liang2, Li, Liu1, Liu2, Liu3, Luo, Ma1, Ma2, Ma3, Pang,
Sun1, Sun2, Xu} and the references therein.

Tariboon and Sitthiwirattham \cite{Tarib} proved the
existence of positive solutions for the three-point boundary-value
problem with integral condition

\begin{equation} \label{eq-2}
{u^{\prime \prime }}(t)+a(t)f(u(t))=0,\ t\in(0,1),
\end{equation}
\begin{equation} \label{eq-3}
u(0)=0,\ u(1)={\alpha}\int_{0}^{\eta}u(s)ds,
\end{equation}
where $0<{\eta}<1$ and $0<{\alpha}< \frac{2}{{\eta}^{2}}$.

This paper is concerned with the existence of positive solutions of
the equation

\begin{equation} \label{eq-4}
{u^{\prime \prime }}(t)+a(t)f(u(t))=0,\ t\in(0,T),
\end{equation}
with the three-point integral boundary condition
\begin{equation} \label{eq-5}
u(0)={\beta}u(\eta),\ u(T)={\alpha}\int_{0}^{\eta}u(s)ds,
\end{equation}
where ${\alpha}>0$, ${\beta}\geq0$, ${\eta}\in (0,T)$ are given
constants. Clearly if ${\beta}=0$ and $T=1$, then \eqref{eq-5}
reduces to \eqref{eq-3}. The purpose of this paper is to give some
results for existence for positive solutions to
\eqref{eq-4}-\eqref{eq-5}, assuming that $0<{\alpha}<
\frac{2T}{{\eta}^{2}}$,
$0\leq{\beta}<\frac{2T-\alpha\eta^{2}}{\alpha\eta^{2}-2\eta+2T}$ and
$f$ is either superlinear or sublinear.

Our results extend and complete those obtained by J. Tariboon and T.
Sitthiwirattham \cite{Tarib}. On the other hand, we point out that the proof of the last part in the sublinear case $(f_{\infty}=0)$ of Theorem 3.1 in \cite{Tarib} is not correct since it is based on an inequality which is not true. We give a new proof, which is different from that of Theorem 3.1 in \cite{Tarib}, and obtain an extended result.

Set
\begin{equation} \label{eq-6}
f_{0}=\lim_{u\rightarrow0^{+}}\frac{f(u)}{u}, \
f_{\infty}=\lim_{u\rightarrow\infty}\frac{f(u)}{u}.
\end{equation}
Then $f_{0}=0$ and $f_{\infty}=\infty$ correspond to the superlinear
case, and $f_{0}=\infty$ and $f_{\infty}=0$ correspond to the
sublinear case. By the positive solution of
\eqref{eq-4}-\eqref{eq-5} we mean that function $u(t)$ is positive
on $0<t<T$ and satisfies the problem \eqref{eq-4}-\eqref{eq-5}.

Throughout this paper, we assume the following hypotheses:

\begin{itemize}
\item[(H1)]
 $f\in C([0,\infty),[0,\infty))$.

\item[(H2)] $a\in C([0,T],[0,\infty))$ and there exists $t_{0}\in[\eta,T]$ such that
$a(t_{0})>0$.
\end{itemize}

The following theorem (Krasnoselskii's fixed-point theorem), will
play an important role in the proof of our main results.

\begin{theorem} [\cite{Krasn}]\label{theo 1.1}
Let $E$ be a Banach space, and let $K\subset E$ be a cone. Assume
$\Omega_{1}$, $\Omega _{2}$ are open bounded subsets of $E$ with $0\in \Omega _{1}$%
, $\overline{\Omega }_{1}\subset \Omega _{2}$, and let

\begin{equation*}\label{eq-7}
A: K\cap (\overline{\Omega }_{2}\backslash  \Omega
_{1})\longrightarrow K
\end{equation*}

be a completely continuous operator such that either

(i) $\ \left\Vert Au\right\Vert \leq \left\Vert u\right\Vert $, $\
u\in K\cap \partial \Omega _{1}$, \ and $\left\Vert Au\right\Vert
\geq \left\Vert u\right\Vert $, $\ u\in K\cap \partial \Omega _{2}$;
or

(ii) $\left\Vert Au\right\Vert \geq \left\Vert u\right\Vert $, $\
u\in K\cap
\partial \Omega _{1}$, \ and $\left\Vert Au\right\Vert \leq \left\Vert
u\right\Vert $, $\ u\in K\cap \partial \Omega _{2}$

hold. Then $A$ has a fixed point in $K\cap
(\overline{\Omega}_{2}\backslash $ $\Omega _{1})$.
\end{theorem}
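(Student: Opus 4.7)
The plan is to deduce the theorem from the theory of the fixed point index on cones. Recall that for a completely continuous $A:K\cap\overline{U}\to K$ with no fixed points on $K\cap\partial U$ (where $U\subset E$ is bounded open), the index $i(A,K\cap U,K)$ is a well-defined integer satisfying additivity, homotopy invariance, normalization, and the solution property (a nonzero index forces existence of a fixed point). If $A$ already has a fixed point on $K\cap(\partial\Omega_1\cup\partial\Omega_2)$ the conclusion is immediate, so assume otherwise and proceed to compute $i(A,K\cap\Omega_j,K)$ for $j=1,2$ in case (i).

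For $i(A,K\cap\Omega_1,K)=1$, I would use the straight-line homotopy $H(t,u)=tAu$, $t\in[0,1]$. Admissibility on $K\cap\partial\Omega_1$: a putative fixed point $u=tAu$ forces $\|u\|=t\|Au\|\leq t\|u\|$, so $t=1$, giving $Au=u$, a contradiction. Normalization then yields $i(A,K\cap\Omega_1,K)=i(0,K\cap\Omega_1,K)=1$.

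For $i(A,K\cap\Omega_2,K)=0$ I would invoke the standard index-zero lemma: since $0\in\Omega_2$, $A$ has no boundary fixed point, and $Au\neq\mu u$ for every $u\in K\cap\partial\Omega_2$ and $0\leq\mu\leq 1$, it follows that $i(A,K\cap\Omega_2,K)=0$. The non-eigenvalue condition is verified directly from the boundary hypothesis: $Au=\mu u$ with $0\leq\mu\leq 1$ gives $\|Au\|=\mu\|u\|\leq\|u\|$, which combined with $\|Au\|\geq\|u\|$ forces $\mu=1$ and $Au=u$, contradicting the no-fixed-point assumption. The lemma itself is proved by connecting $A$ via a homotopy to a fixed-point-free map of the form $u\mapsto Au+\lambda_0 e$ with $\lambda_0$ large and $e\in K\setminus\{0\}$, using complete continuity of $A$ to bound the parameter $\lambda$ in any equation $u=Au+\lambda e$. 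Additivity across the annular decomposition then yields
\[
i\bigl(A,K\cap(\Omega_2\setminus\overline{\Omega_1}),K\bigr)=i(A,K\cap\Omega_2,K)-i(A,K\cap\Omega_1,K)=-1\neq 0,
\]
so the solution property produces a fixed point of $A$ in $K\cap(\overline{\Omega_2}\setminus\Omega_1)$. Case (ii) is symmetric, with the two indices swapped and the resulting annular index equal to $+1$.

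The main obstacle is not any single estimate but the set-up of the fixed point index on cones, which requires retraction arguments (Dugundji extension composed with a retraction onto $K$) to reduce to Leray--Schauder degree in the ambient Banach space. Since the theorem is the classical Krasnoselskii compression--expansion result, in practice one cites an authoritative reference such as Deimling's \emph{Nonlinear Functional Analysis} or Guo--Lakshmikantham's \emph{Nonlinear Problems in Abstract Cones} rather than reconstructing the index theory from scratch.
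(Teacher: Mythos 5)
This is Theorem \ref{theo 1.1}, the classical Krasnoselskii compression--expansion theorem, which the paper does not prove: it is stated with a citation to Krasnoselskii's book and used as a black box. So there is no in-paper argument to compare against; what you have written is the standard modern proof via the fixed point index on cones (as in Guo--Lakshmikantham or Deimling), and its outline is sound: index $1$ on $K\cap\Omega_1$ via the admissible homotopy $tAu$ (admissibility uses $0\in\Omega_1$, hence $\|u\|>0$ on $\partial\Omega_1$), index $0$ on $K\cap\Omega_2$ via the non-eigenvalue lemma, and additivity over the annulus giving a nonzero index $-1$ in case (i). Two technical points deserve explicit mention if you were to write this out in full. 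First, the operator in the statement is defined only on $K\cap(\overline{\Omega}_2\setminus\Omega_1)$, so before $i(A,K\cap\Omega_1,K)$ and $i(A,K\cap\Omega_2,K)$ even make sense and before additivity can be invoked, you must extend $A$ to a completely continuous map of $K\cap\overline{\Omega}_2$ into $K$ (Dugundji extension into $\overline{\mathrm{co}}\,A(K\cap(\overline{\Omega}_2\setminus\Omega_1))\subset K$); your mention of Dugundji concerns only the construction of the index itself, not this extension step. Second, the index-zero lemma you quote requires $\inf_{u\in K\cap\partial\Omega_2}\|Au\|>0$ in addition to the non-eigenvalue condition; this does hold here because $0\in\Omega_1\subset\Omega_2$ forces $\|u\|\geq r>0$ on $\partial\Omega_2$ and hence $\|Au\|\geq\|u\|\geq r$, but it should be checked rather than left implicit. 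With those two points supplied, the argument is complete and is the proof one would find in the cited literature.
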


\section{Preliminaries}
To prove the main existence results we will employ several
straightforward lemmas. These lemmas are based on the linear
boundary-value problem.

\begin{lemma}\label{lem 2.1}

Let $\beta \neq \frac{2T-\alpha \eta ^{2}}{\alpha \eta ^{2}-2\eta
+2T}$. Then for $y\in C([0,T],\mathbb{R})$, the problem
\begin{equation}\label{eq-8}
{u^{\prime \prime }}(t)+y(t)=0, \ t\in (0,T),
\end{equation}
\begin{equation}\label{eq-9}
u(0)=\beta u(\eta ), \ u(T)=\alpha \int_{0}^{\eta }u(s)ds,
\end{equation}
has a unique solution
\begin{eqnarray*}
u(t)&=&\frac{\beta (2T-\alpha \eta ^{2})-2\beta (1-\alpha \eta
)t}{(\alpha \eta ^{2}-2T)-\beta (2\eta -\alpha \eta
^{2}-2T)}\int_{0}^{\eta }(\eta -s)y(s)ds \\
&&+\frac{\alpha \beta
\eta -\alpha (\beta -1)t}{(\alpha \eta ^{2}-2T)-\beta (2\eta -\alpha
\eta
^{2}-2T)}\int_{0}^{\eta }(\eta -s)^{2}y(s)ds \\
&&+\frac{2(\beta-1)t-2\beta \eta }{(\alpha \eta
^{2}-2T)-\beta (2\eta -\alpha \eta ^{2}-2T)}\int_{0}^{T}(T-s)y(s)ds-%
\int_{0}^{t}(t-s)y(s)ds.
\end{eqnarray*}
\end{lemma}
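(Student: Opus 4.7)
The plan is to integrate the equation $u''=-y$ twice to get a two-parameter family of solutions, then impose the two boundary conditions in \eqref{eq-9} to obtain a linear system for the two unknown constants, solve that system, and substitute back.

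First I would integrate twice starting from $u''(t)=-y(t)$ to obtain
\begin{equation*}
u(t)=A+Bt-\int_{0}^{t}(t-s)y(s)\,ds,
\end{equation*}
where $A=u(0)$, $B=u'(0)$ and I have used the Cauchy formula $\int_0^t\int_0^\tau y(s)\,ds\,d\tau=\int_0^t(t-s)y(s)\,ds$. Evaluating this expression at $t=\eta$ and at $t=T$, and also computing $\int_0^\eta u(s)\,ds$ by swapping the order of integration in the resulting double integral (Fubini), I get
\begin{equation*}
\int_{0}^{\eta}u(s)\,ds=A\eta+\tfrac{1}{2}B\eta^{2}-\tfrac{1}{2}\int_{0}^{\eta}(\eta-s)^{2}y(s)\,ds.
\end{equation*}
Plugging these into $u(0)=\beta u(\eta)$ and $u(T)=\alpha\int_0^\eta u(s)\,ds$ yields the linear system
\begin{equation*}
\begin{aligned}
(1-\beta)A-\beta\eta\, B &= -\beta\int_{0}^{\eta}(\eta-s)y(s)\,ds,\\
(1-\alpha\eta)A+\bigl(T-\tfrac{\alpha\eta^{2}}{2}\bigr)B &= \int_{0}^{T}(T-s)y(s)\,ds-\tfrac{\alpha}{2}\int_{0}^{\eta}(\eta-s)^{2}y(s)\,ds.
\end{aligned}
\end{equation*}

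Next I would compute the determinant of this $2\times 2$ system. A direct expansion gives
\begin{equation*}
\Delta=(1-\beta)\bigl(T-\tfrac{\alpha\eta^{2}}{2}\bigr)+\beta\eta(1-\alpha\eta)=-\tfrac{1}{2}\bigl[(\alpha\eta^{2}-2T)-\beta(2\eta-\alpha\eta^{2}-2T)\bigr].
\end{equation*}
The hypothesis $\beta\neq\frac{2T-\alpha\eta^{2}}{\alpha\eta^{2}-2\eta+2T}$ is precisely the statement that $\Delta\neq 0$, so the system is uniquely solvable by Cramer's rule. I would then write $A$ and $B$ as linear combinations of the three integrals $I_1=\int_0^\eta(\eta-s)y(s)\,ds$, $I_2=\int_0^\eta(\eta-s)^2y(s)\,ds$, $I_3=\int_0^T(T-s)y(s)\,ds$, and substitute back into $u(t)=A+Bt-\int_0^t(t-s)y(s)\,ds$, collecting the coefficients of $I_1$, $I_2$, $I_3$ to match the claimed formula.

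The main obstacle is simply keeping the algebra straight: matching the coefficient of each of $I_1,I_2,I_3$ in $A+Bt$ with the numerators $\beta(2T-\alpha\eta^{2})-2\beta(1-\alpha\eta)t$, $\alpha\beta\eta-\alpha(\beta-1)t$, and $2(\beta-1)t-2\beta\eta$ respectively (all over the common denominator $-2\Delta$) requires careful bookkeeping, but it is a deterministic calculation. The $-\int_0^t(t-s)y(s)\,ds$ term carries over untouched from the general solution. For completeness one may also verify the formula by direct substitution into \eqref{eq-8}--\eqref{eq-9}, which is easier than the derivation and confirms uniqueness via linearity (the associated homogeneous problem has only the trivial solution under the stated condition on $\beta$).
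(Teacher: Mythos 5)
Your proposal is correct and follows essentially the same route as the paper: integrate twice to get $u(t)=u(0)+u'(0)t-\int_0^t(t-s)y(s)\,ds$, compute $\int_0^\eta u(s)\,ds$ by Fubini, impose the two boundary conditions to obtain a $2\times2$ linear system for $u(0)$ and $u'(0)$, and observe that its determinant vanishes precisely when $\beta=\frac{2T-\alpha\eta^2}{\alpha\eta^2-2\eta+2T}$. Your determinant computation matches the paper's denominator (up to the factor $-\tfrac12$), and the remaining substitution is the same bookkeeping the paper carries out.
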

\begin{proof}
From \eqref{eq-8}, we have
\begin{equation}\label{eq-10}
u(t)=u(0)+u^{\prime }(0)t-\int_{0}^{t}(t-s)y(s)ds.
\end{equation}
Integrating \eqref{eq-10} from $0$ to $\eta $, where $\eta \in
(0,T)$, we have
\begin{eqnarray*}
\int_{0}^{\eta }u(s)ds&=&u(0)\eta +u^{\prime }(0)\frac{\eta ^{2}}{2}%
-\int_{0}^{\eta }(\int_{0}^{\tau }(\tau -s)y(s)ds)d\tau \\
&=&u(0)\eta +u^{\prime }(0)%
\frac{\eta ^{2}}{2}-\frac{1}{2}\int_{0}^{\eta }(\eta -s)^{2}y(s)ds.
\end{eqnarray*}
Since
\begin{equation*}\label{eq-11}
u(T)=u(0)+u^{\prime }(0)T-\int_{0}^{T}(T-s)y(s)ds  \ \  \text{and}
\end{equation*}
\begin{equation*}\label{eq-12}
u(\eta )=u(0)+u^{\prime }(0)\eta -\int_{0}^{\eta }(\eta -s)y(s)ds.
\end{equation*}
By \eqref{eq-9}, from $u(0)=\beta u(\eta )$, we have
\begin{equation*}\label{eq-13}
(\beta -1)u(0)+\eta \beta u^{\prime }(0)=\beta \int_{0}^{\eta }(\eta
-s)y(s)ds,
\end{equation*}
and from $\ u(T)=\alpha \int_{0}^{\eta }u(s)ds$, we have
\begin{equation*}\label{eq-14}
(1-\alpha \eta )u(0)+(T-\alpha \frac{\eta ^{2}}{2})u^{\prime
}(0)=\int_{0}^{T}(T-s)y(s)ds-\frac{\alpha }{2}\int_{0}^{\eta }(\eta
-s)^{2}y(s)ds.
\end{equation*}
Therefore,
\begin{eqnarray*}
u(0)&=&\frac{\beta (2T-\alpha \eta ^{2})}{(\alpha \eta ^{2}-2T)-\beta (2\eta -\alpha \eta ^{2}-2T)}%
\int_{0}^{\eta }(\eta-s)y(s)ds \\
&&-\frac{2\beta \eta}{(\alpha \eta ^{2}-2T)-\beta (2\eta -\alpha \eta ^{2}-2T)} \int_{0}^{T}(T-s)y(s)ds \\
&&+\frac{\alpha \beta \eta}{(\alpha \eta ^{2}-2T)-\beta (2\eta
-\alpha \eta ^{2}-2T)} \int_{0}^{\eta }(\eta -s)^{2}y(s)ds,
\end{eqnarray*}
\begin{eqnarray*}
u^{\prime }(0)&=&\frac{2(\beta -1)}{(\alpha \eta ^{2}-2T)-\beta
(2\eta -\alpha \eta ^{2}-2T)}\int_{0}^{T}(T-s)y(s)ds \\
&&-\frac{\alpha (\beta -1)}{(\alpha \eta ^{2}-2T)-\beta (2\eta -\alpha \eta ^{2}-2T)}\int_{0}^{\eta }(\eta -s)^{2}y(s)ds \\
&&-\frac{2\beta (1-\alpha \eta )}{(\alpha \eta ^{2}-2T)-\beta (2\eta
-\alpha \eta ^{2}-2T)}\int_{0}^{\eta }(\eta -s)y(s)ds,
\end{eqnarray*}

from which it follows that
\begin{eqnarray*}
u(t)&=&\frac{\beta (2T-\alpha \eta ^{2})-2\beta (1-\alpha \eta
)t}{(\alpha \eta ^{2}-2T)-\beta (2\eta -\alpha \eta
^{2}-2T)}\int_{0}^{\eta }(\eta -s)y(s)ds \\
&&+\frac{\alpha \beta \eta -\alpha (\beta -1)t}{(\alpha \eta
^{2}-2T)-\beta (2\eta -\alpha \eta
^{2}-2T)}\int_{0}^{\eta }(\eta -s)^{2}y(s)ds \\
&&+\frac{2(\beta-1)t-2\beta \eta }{(\alpha \eta
^{2}-2T)-\beta (2\eta -\alpha \eta ^{2}-2T)}\int_{0}^{T}(T-s)y(s)ds-%
\int_{0}^{t}(t-s)y(s)ds.
\end{eqnarray*}
\end{proof}

\begin{lemma}\label{lem 2.2}
Let $0<\alpha <\frac{2T}{\eta ^{2}}$, $\ 0\leq \beta <%
\frac{2T-\alpha \eta ^{2}}{\alpha \eta ^{2}-2\eta +2T}$. If $y\in C([0,T],[0,\infty
))$, then the unique solution $u$ of
\eqref{eq-8}-\eqref{eq-9} satisfies $\ u(t)\geq 0$ for $t\in [0,T]$.
\end{lemma}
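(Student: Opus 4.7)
The plan is to use the concavity of $u$ to reduce non-negativity on $[0,T]$ to non-negativity at the two endpoints, and then to verify both $u(0)\ge 0$ and $u(T)\ge 0$ by direct substitution into the explicit formula of Lemma 2.1 followed by careful sign analysis.

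From $u''(t)=-y(t)\le 0$ the solution $u$ is concave on $[0,T]$, so for every $t\in[0,T]$ one has $u(t)\ge\frac{T-t}{T}u(0)+\frac{t}{T}u(T)\ge\min\{u(0),u(T)\}$. Hence it suffices to prove $u(0)\ge 0$ and $u(T)\ge 0$. I would then set $D:=(\alpha\eta^{2}-2T)-\beta(2\eta-\alpha\eta^{2}-2T)$; a brief rearrangement gives $D=\alpha\eta^{2}(1+\beta)+2\beta(T-\eta)-2T$, and since $\alpha\eta^{2}-2\eta+2T>0$ (because $T>\eta$), the hypothesis on $\beta$ is precisely the statement $D<0$. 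This is the only sign fact I extract from the assumptions.

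Setting $t=0$ in the Lemma 2.1 formula and splitting $\int_{0}^{T}=\int_{0}^{\eta}+\int_{\eta}^{T}$, I would combine the three integrals on $[0,\eta]$ over the common factor $\beta/D$; after simplification the combined integrand reduces to $-s\bigl[2(T-\eta)+\alpha\eta(\eta-s)\bigr]\le 0$, while the remaining integral on $[\eta,T]$ carries the factor $-2\eta(T-s)\le 0$. Since $\beta/D\le 0$, this gives $u(0)\ge 0$. Setting $t=T$ and performing the analogous simplification produces
\[
u(T)=\frac{\alpha}{D}\left[\beta\eta(2T-\eta)\int_{0}^{\eta}(\eta-s)y(s)\,ds+\bigl(T-\beta(T-\eta)\bigr)\int_{0}^{\eta}(\eta-s)^{2}y(s)\,ds-\eta^{2}(1+\beta)\int_{0}^{T}(T-s)y(s)\,ds\right],
\]
and after the same splitting the integrand on $[0,\eta]$ factors as $g(s)=s\bigl[(T-\beta(T-\eta))s-\eta(2T-\eta)\bigr]$. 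The bracketed factor is linear in $s$ and equals $-\eta(2T-\eta)\le 0$ at $s=0$ and $-\eta(T-\eta)(1+\beta)\le 0$ at $s=\eta$, hence is $\le 0$ throughout $[0,\eta]$; therefore $g\le 0$ on $[0,\eta]$, and together with $\alpha/D<0$ this yields $u(T)\ge 0$.

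The main obstacle is purely algebraic: the three-term formula of Lemma 2.1 has rational coefficients whose individual signs are not transparent, and the genuine work is to regroup the integrands on $[0,\eta]$ so as to recognise a manifestly non-positive product. Once that bookkeeping is done, both endpoint inequalities follow at once from $D<0$, and the concavity reduction finishes the proof.
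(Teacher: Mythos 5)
Your proof is correct, and I verified the algebra: at $t=0$ the three kernels on $[0,\eta]$ do combine (over the common factor $\beta/D$) to $-s\bigl[2(T-\eta)+\alpha\eta(\eta-s)\bigr]$; at $t=T$ the coefficients of the three integrals reduce to $\alpha\beta\eta(2T-\eta)/D$, $\alpha\bigl(T-\beta(T-\eta)\bigr)/D$ and $-\alpha\eta^{2}(1+\beta)/D$, and the combined $[0,\eta]$ integrand does factor as $s\bigl[(T-\beta(T-\eta))s-\eta(2T-\eta)\bigr]$, whose linear factor is $\le 0$ at both $s=0$ and $s=\eta$. However, your route is genuinely different from the paper's. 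The paper never returns to the explicit representation of Lemma \ref{lem 2.1}: it argues qualitatively from concavity, using the trapezoid bound $\int_{0}^{\eta}u(s)\,ds\ge\frac{\eta}{2}(u(0)+u(\eta))$ together with the boundary conditions to obtain $u(T)\ge\frac{\alpha(\beta+1)\eta}{2}u(\eta)$ and the secant-slope inequality $\frac{u(\eta)-u(0)}{\eta}\ge\frac{u(T)-u(0)}{T}$, and then shows that either $u(0)<0$ or $u(T)<0$ forces $u(\eta)<0$, which upon dividing the resulting inequality by the negative quantity $u(\eta)$ yields $\beta\ge\frac{2T-\alpha\eta^{2}}{\alpha\eta^{2}-2\eta+2T}$, a contradiction. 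Your final reduction to the endpoints via concavity coincides with the paper's last step, but your verification of $u(0)\ge 0$ and $u(T)\ge 0$ is a direct sign analysis of the solution kernel where the paper's is a proof by contradiction. Your version buys explicit nonnegative integral representations of $u(0)$ and $u(T)$ (essentially the sign of the Green's function at the endpoints), which is more information and transfers readily to related problems; the paper's version is shorter and avoids the coefficient bookkeeping, at the cost of the slightly delicate case analysis around the sign of $u(\eta)$.
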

\begin{proof}
From the fact that $u^{^{\prime \prime }}(t)=-y(t)\leq 0$, we have
that the graph of $u(t)$ is concave down on $(0,T)$ and
\begin{equation} \label{eq-11}
\int_{0}^{\eta }u(s)ds\geq \frac{\eta }{2}(u(0)+u(\eta )).
\end{equation}
Combining \eqref{eq-9} with \eqref{eq-11}, we get
\begin{equation} \label{eq-12}
u(T)\geq \frac{\alpha (\beta +1)\eta }{2}u(\eta ).
\end{equation}
Since the graph of $u$ is concave down, we get
\begin{equation*} \label{eq-13}
\frac{u(\eta )-u(0)}{\eta }\geq \frac{u(T)-u(0)}{T}.
\end{equation*}
Combining this with \eqref{eq-9} and \eqref{eq-12}, \ we obtain
\begin{equation*} \label{eq-14}
(1-\beta )\frac{u(\eta )}{\eta }\geq \frac{\alpha (\beta +1)\eta -2\beta }{%
2T}u(\eta ).
\end{equation*}

If $u(0)<0$, then $u(\eta )<0$. It implies $\beta \geq
\frac{2T-\alpha \eta^{2}}{\alpha \eta ^{2}-2\eta +2T}$, a contradiction to $\beta <\frac{%
2T-\alpha \eta ^{2}}{\alpha \eta ^{2}-2\eta +2T}$.

If $u(T)<0$, then  $u(\eta )<0$, and the same contradiction emerges.
Thus, it is true that $u(0)\geq 0$, $\ u(T)\geq 0,$ together with
the concavity of $u(t)$, we have $u(t)\geq 0$ for \ $t\in [0,T].$
\end{proof}

\begin{lemma}\label{lem 2.3}
Let $\ \alpha >\frac{2T}{\eta ^{2}}$, $\beta\geq0$. If $y\in C([0,T],[0,\infty
))$, then the problem \eqref{eq-8}-\eqref{eq-9} has no positive
solutions.
\end{lemma}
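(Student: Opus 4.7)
The natural plan is to mimic the chain of inequalities used in the proof of Lemma~\ref{lem 2.2} and to extract a contradiction from the fact that the key coefficient $2T-\alpha\eta^{2}$ changes sign once $\alpha>2T/\eta^{2}$. Concretely, I would argue by contradiction: assume $u$ is a positive solution, so $u(t)>0$ on $(0,T)$. In particular $u(\eta)>0$, and since $\beta\ge 0$ one has $u(0)=\beta u(\eta)\ge 0$. From $u''(t)=-y(t)\le 0$ the function $u$ is concave on $[0,T]$.

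Next I would reuse the two ingredients already employed in Lemma~\ref{lem 2.2}. First, the trapezoidal estimate for concave functions gives $\int_{0}^{\eta}u(s)\,ds\ge\tfrac{\eta}{2}(u(0)+u(\eta))$, which together with the boundary condition $u(T)=\alpha\int_{0}^{\eta}u(s)\,ds$ and $u(0)=\beta u(\eta)$ yields
\[
u(T)\;\ge\;\frac{\alpha(\beta+1)\eta}{2}\,u(\eta).
\]
Second, concavity on $[0,T]$ provides the slope comparison
\[
\frac{u(\eta)-u(0)}{\eta}\;\ge\;\frac{u(T)-u(0)}{T},
\]
which, after substituting $u(0)=\beta u(\eta)$ and clearing denominators, rearranges to $[T-(T-\eta)\beta]\,u(\eta)\ge\eta\,u(T)$.

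Inserting the first bound into the second and dividing by $u(\eta)>0$ produces the same master inequality that governed Lemma~\ref{lem 2.2}, namely
\[
2T-\alpha\eta^{2}\;\ge\;\beta\bigl(2T-2\eta+\alpha\eta^{2}\bigr).
\]
Here is where the present hypothesis flips the argument: under $\alpha>2T/\eta^{2}$ the left-hand side is strictly negative, whereas the right-hand side is nonnegative because $\beta\ge 0$ and $2T-2\eta+\alpha\eta^{2}>0$ (using $T>\eta>0$ and $\alpha>0$). This contradiction rules out any positive solution and completes the proof.

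I do not anticipate any genuine obstacle; the only small care point is ensuring $u(\eta)>0$ before dividing, which is immediate from the definition of positive solution. In spirit, Lemma~\ref{lem 2.3} is the mirror image of Lemma~\ref{lem 2.2}: the same chain of concavity-based inequalities, but with the sign of $2T-\alpha\eta^{2}$ reversed, now forces \emph{non-existence} rather than permitting existence.
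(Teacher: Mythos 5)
Your proof is correct. It uses the same two ingredients as the paper --- concavity of $u$, the trapezoidal bound $\int_{0}^{\eta}u\ge\tfrac{\eta}{2}(u(0)+u(\eta))$, and the two boundary conditions --- but organizes them differently. You rerun the computation of Lemma~\ref{lem 2.2} to arrive at the master inequality $2T-\alpha\eta^{2}\ge\beta(2T-2\eta+\alpha\eta^{2})$ and then read off a sign contradiction from $\alpha>2T/\eta^{2}$; the paper instead combines the trapezoidal bound directly with $u(T)=\alpha\int_{0}^{\eta}u$ to get the strict inequality $u(T)/T>u(\eta)/\eta$, which contradicts concavity together with $u(0)\ge 0$ without ever isolating $\beta$. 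The two routes are essentially equivalent in content; the paper's version has the small additional feature that, since it only assumes $u\ge 0$ rather than $u>0$ on $(0,T)$, it must (and does) treat the degenerate case $u(T)=0$ separately, whereas your division by $u(\eta)>0$ is legitimate only under the paper's stated definition of positive solution, which does guarantee $u(\eta)>0$. Your single-case argument is arguably cleaner under that definition, while the paper's is marginally more robust in that it excludes even nonnegative, not-identically-zero solutions.
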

\begin{proof}
Suppose that problem \eqref{eq-8}-\eqref{eq-9} has a positive
solution $u$ satisfying $u(t)\geq 0,$ $t\in [0,T]$.

If $u(T)>0$, then $\int_{0}^{\eta }u(s)ds>0$. It implies
\begin{equation*} \label{eq-15}
u(T)=\alpha \int_{0}^{\eta }u(s)ds>\frac{2T}{\eta ^{2}}\frac{\eta }{2}%
(u(0)+u(\eta ))=\frac{T(\beta +1)u(\eta )}{\eta }\geq \frac{Tu(\eta
)}{\eta },
\end{equation*}
that is
\begin{equation*} \label{eq-16}
\frac{u(T)}{T}>\frac{u(\eta )}{\eta },
\end{equation*}
which is a contradiction to the concavity of $u$.

If  $u(T)=0$, then $\int_{0}^{\eta }u(s)ds=0$, this is $u(t)\equiv 0$ for all $%
t\in [0,\eta ].$ If there exists  $t_{0}\in (\eta ,T)$ such that $%
u(t_{0})>0$, then  $\ u(0)=u(\eta )<u(t_{0})$, a violation of the
concavity of $u$. Therefore, no positive solutions exist.
\end{proof}

\begin{lemma}\label{lem 2.4}
Let $0<\alpha <\frac{2T}{\eta ^{2}}$, $\ 0\leq\beta <%
\frac{2T-\alpha \eta ^{2}}{\alpha \eta ^{2}-2\eta +2T}$. If $y\in
C([0,T],[0,\infty ))$, then the unique solution $u$ of
\eqref{eq-8}-\eqref{eq-9} satisfies
\begin{equation} \label{eq-17}
\min_{t\in [\eta,T]}u(t)\geq \gamma \|u\|,\ \|u\|=\max_{t\in
[0,T]}|u(t)|,
\end{equation}
where
\begin{equation} \label{eq-18}
\gamma:=\min\left\{\frac{\eta}{T},
\frac{\alpha(\beta+1)\eta^{2}}{2T},
\frac{\alpha(\beta+1)\eta(T-\eta)}{2T-\alpha(\beta+1)\eta^{2}}\right\}\in\left(0,1\right).
\end{equation}
\end{lemma}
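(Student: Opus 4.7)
The strategy is to combine the facts that $u$ is non-negative on $[0,T]$ (by Lemma \ref{lem 2.2}) and concave, since $u''=-y\leq 0$. Concavity forces $u$ to attain its minimum on any subinterval at one of the endpoints, so $\min_{t\in[\eta,T]}u(t)=\min\{u(\eta),u(T)\}$, and it suffices to bound each of $u(\eta)$ and $u(T)$ from below by a constant multiple of $\|u\|$. I would pick $t^{*}\in[0,T]$ with $u(t^{*})=\|u\|$ and split into two cases according to whether $t^{*}\in[\eta,T]$ or $t^{*}\in[0,\eta]$; the three candidates inside the $\min$ defining $\gamma$ will come respectively from the two sub-bounds in Case 1 and the single sub-bound in Case 2.

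In Case 1 ($t^{*}\in[\eta,T]$), concavity on $[0,t^{*}]$ together with $u(0)\geq 0$ gives $u(\eta)\geq(\eta/t^{*})u(t^{*})\geq(\eta/T)\|u\|$, accounting for the first entry in $\gamma$. For the second entry, I combine the trapezoidal inequality $\int_{0}^{\eta}u(s)\,ds\geq(\eta/2)(u(0)+u(\eta))$ for concave $u$ (already invoked as \eqref{eq-11}) with the boundary conditions $u(0)=\beta u(\eta)$ and $u(T)=\alpha\int_{0}^{\eta}u$, producing $u(T)\geq\frac{\alpha(\beta+1)\eta}{2}u(\eta)\geq\frac{\alpha(\beta+1)\eta^{2}}{2T}\|u\|$. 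Note that this second inequality (derived from the boundary conditions alone) holds in Case 2 as well, so it will be reused below.

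In Case 2 ($t^{*}\in[0,\eta]$), $u$ is non-increasing on $[\eta,T]$, so the minimum there equals $u(T)$. The non-obvious step is the reverse-chord inequality: by concavity, for any $t<\eta$ the graph of $u$ lies below the chord through $(\eta,u(\eta))$ and $(T,u(T))$ extended to the left. Evaluating this at $t=t^{*}$ and using $u(T)\leq u(\eta)$ yields $\|u\|\leq\frac{Tu(\eta)-\eta u(T)}{T-\eta}$. Substituting the lower bound $u(T)\geq\frac{\alpha(\beta+1)\eta}{2}u(\eta)$ and solving for $u(T)$ produces $u(T)\geq\frac{\alpha(\beta+1)\eta(T-\eta)}{2T-\alpha(\beta+1)\eta^{2}}\|u\|$, which is the third entry of $\gamma$.

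The main obstacle is this Case 2 step: the reverse-chord bound is the piece that does not follow from routine concavity, and one must also verify that the denominator $2T-\alpha(\beta+1)\eta^{2}$ is strictly positive so that the bound is meaningful. This positivity is a consequence of the hypothesis $\beta<\frac{2T-\alpha\eta^{2}}{\alpha\eta^{2}-2\eta+2T}$, which, because $T>\eta$ makes the denominator $\alpha\eta^{2}-2\eta+2T\geq\alpha\eta^{2}$, is stronger than $\beta<\frac{2T-\alpha\eta^{2}}{\alpha\eta^{2}}$, i.e.\ $\alpha(\beta+1)\eta^{2}<2T$. Finally, $\gamma\in(0,1)$ follows by noting that all three candidates are positive and that $\eta/T<1$.
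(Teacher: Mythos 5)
Your proposal is correct and follows essentially the same route as the paper: the same case split on the location of the maximizer, the same trapezoid bound $\int_0^\eta u \geq \tfrac{\eta}{2}(u(0)+u(\eta))$ combined with the boundary conditions for the second constant, and the same extended-chord estimate $u(t^*)\leq \frac{Tu(\eta)-\eta u(T)}{T-\eta}$ for the third. The only cosmetic difference is that you merge the paper's first two cases into one by bounding $u(\eta)$ and $u(T)$ simultaneously, and you make explicit the positivity of $2T-\alpha(\beta+1)\eta^2$, which the paper leaves implicit.
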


\begin{proof}
We divide the proof into three cases. Set $u(t_{1})=\|u\|$.

Case 1. If $\eta\leq\ t_{1}\leq T$ and $\min_{t\in
[\eta,T]}u(t)=u(\eta)$, then the concavity of $u$ implies that
\begin{equation*}\label{eq-19}
\frac{u(\eta)}{\eta}\geq\frac{u(t_{1})}{t_{1}}\geq
\frac{u(t_{1})}{T}.
\end{equation*}
Thus,
\begin{equation*}\label{eq-20}
\min_{t\in [\eta,T]}u(t)\geq\frac{\eta}{T}\|u\|.
\end{equation*}

Case 2. If $\eta\leq\ t_{1}\leq T$ and $\min_{t\in
[\eta,T]}u(t)=u(T)$, then \eqref{eq-9}, \eqref{eq-11} and the
concavity of $u$ implies
\begin{eqnarray*}
u(T)&=& \alpha \int_{0}^{\eta }u(s)ds\geq \frac{\alpha(\beta+1)\eta^{2}}{2} \frac{u(\eta)}{\eta}\\
&\geq& \frac{\alpha(\beta+1)\eta^{2}}{2} \frac{u(t_{1})}{t_{1}}\\
&\geq& \frac{\alpha(\beta+1)\eta^{2}}{2}\frac{u(t_{1})}{T}.
\end{eqnarray*}
This implies that
\begin{equation*}\label{eq-21}
\min_{t\in [\eta,T]}u(t)\geq
\frac{\alpha(\beta+1)\eta^{2}}{2T}\|u\|.
\end{equation*}

Case 3. If $t_{1}\leq\eta< T$, then $\min_{t\in [\eta,T]}u(t)=u(T)$.
Using the concavity of $u$ and  \eqref{eq-9}, \eqref{eq-11}, we
obtain
\begin{eqnarray*}
u(t_{1})&\leq&u(T)+\frac{u(T)-u(\eta)}{T-\eta}(t_{1}-T)\\
&\leq& u(T)+\frac{u(T)-u(\eta)}{T-\eta}(0-T)\\
&\leq&u(T)\left[1-T\frac{1-\frac{2}{\alpha(\beta+1)\eta}}{T-\eta}\right]\\
&=&\frac{2T-\alpha(\beta+1)\eta^{2}}{\alpha(\beta+1)\eta(T-\eta)}u(T),
\end{eqnarray*}
from which it follows that
\begin{equation*}\label{eq-20}
\min_{t\in[\eta,T]}u(t)\geq
\frac{\alpha(\beta+1)\eta(T-\eta)}{2T-\alpha(\beta+1)\eta^{2}}\|u\|.
\end{equation*}
Summing up, we have
\begin{equation*}\label{eq-21}
\min_{t\in [\eta,T]}u(t)\geq \gamma \|u\|,
\end{equation*}
where
\begin{equation*}\label{eq-27}
\gamma:=\min\left\{\frac{\eta}{T},
\frac{\alpha(\beta+1)\eta^{2}}{2T},
\frac{\alpha(\beta+1)\eta(T-\eta)}{2T-\alpha(\beta+1)\eta^{2}}\right\}.
\end{equation*}
This completes the proof.
\end{proof}

\section{Existence of positive solutions}
Now we are in the position to establish the main result.
\begin{theorem}\label{theo 3.1}
Assume {\rm (H1)} and {\rm (H2)} hold, and\ $0<\alpha <\frac{2T}{\eta ^{2}}$, $\ 0\leq\beta <%
\frac{2T-\alpha \eta ^{2}}{\alpha \eta ^{2}-2\eta +2T}$. Then the
problem \eqref{eq-4}-\eqref{eq-5} has at least one positive solution
in the case

(i) $f_{0}=0$ and $f_{\infty}=\infty$ (superlinear), or

(ii) $f_{0}=\infty$ and $f_{\infty}=0$ (sublinear).
\end{theorem}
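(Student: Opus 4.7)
The plan is to recast the BVP as a fixed-point problem for the operator $A$ on $C([0,T])$ defined by letting $Au(t)$ equal the right-hand side of the formula in Lemma \ref{lem 2.1} with $y(s)$ replaced by $a(s)f(u(s))$. Fixed points of $A$ are exactly solutions of \eqref{eq-4}--\eqref{eq-5}. I would then work on the cone
\begin{equation*}
K=\{u\in C([0,T]):\ u(t)\geq 0\text{ on }[0,T]\text{ and }\min_{t\in[\eta,T]}u(t)\geq\gamma\|u\|\},
\end{equation*}
with $\gamma$ as in Lemma \ref{lem 2.4}. Lemma \ref{lem 2.2} gives $Au\geq 0$, and Lemma \ref{lem 2.4} applied to $Au$ gives the concavity-type bound, so $A(K)\subset K$. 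Complete continuity of $A:K\to K$ follows from the explicit integral form together with hypothesis (H1) and Arzel\`a--Ascoli. The strategy is then to pick two radii $0<H_1<H_2$ and apply Theorem \ref{theo 1.1}.

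For the superlinear case (i), since $f_0=0$ I would fix $\varepsilon>0$ so small that, after inserting $f(u(s))\leq\varepsilon u(s)\leq\varepsilon\|u\|$ into the explicit formula for $Au$ and taking the supremum over $t\in[0,T]$, one obtains $\|Au\|\leq\|u\|$ on $\partial\Omega_{H_1}$; the relevant constant $C$ is the resulting bounded integral operator norm computed from the kernel pieces in Lemma \ref{lem 2.1}. Then, using $f_\infty=\infty$, I would choose $M$ large enough that $f(u)\geq Mu$ whenever $u\geq\gamma H_2$, and use the cone inequality $u(s)\geq\gamma\|u\|$ on $[\eta,T]$ together with (H2) (which ensures $\int_\eta^T a(s)\,ds$-type quantities are positive near $t_0$) to get $\|Au\|\geq Au(t^\ast)\geq c\,M\gamma\|u\|\geq\|u\|$ for a suitable $t^\ast$ and a positive constant $c$ extracted from the explicit formula. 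Alternative (i) of Theorem \ref{theo 1.1} then yields a fixed point in $K\cap(\overline{\Omega}_{H_2}\setminus\Omega_{H_1})$.

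The sublinear case (ii) reverses these roles, and this is where the main obstacle lies, because it is precisely the step where the argument in \cite{Tarib} breaks down. For small $H_1$, $f_0=\infty$ yields $f(u)\geq M'u$ and, by the same lower-bound computation as above, $\|Au\|\geq\|u\|$ on $\partial\Omega_{H_1}$. For the upper bound on $\partial\Omega_{H_2}$, I cannot simply write $f(u)\leq\varepsilon u$ for all values of $u(s)$ that appear, since $u(s)$ may be small on part of $[0,T]$. The fix is the standard splitting: given $\varepsilon>0$ with $\varepsilon C\leq 1/2$, use $f_\infty=0$ to find $N$ with $f(u)\leq\varepsilon u$ for $u\geq N$, set $M^\ast=\max_{0\leq v\leq N}f(v)$ (finite by (H1)), and note
\begin{equation*}
f(u(s))\leq\varepsilon u(s)+M^\ast\qquad\text{for all }s\in[0,T].
\end{equation*}
Inserting this into the explicit formula gives $\|Au\|\leq\varepsilon C\|u\|+C'M^\ast$, and choosing $H_2$ larger than $\max\{H_1,N/\gamma,2C'M^\ast\}$ yields $\|Au\|\leq\|u\|$ on $\partial\Omega_{H_2}$. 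Alternative (ii) of Theorem \ref{theo 1.1} then produces the desired positive solution. The technical heart of the write-up will be (a) extracting the explicit constants $c,C,C'$ from the four-term expression in Lemma \ref{lem 2.1}, keeping track of the sign of the denominator $D=(\alpha\eta^2-2T)-\beta(2\eta-\alpha\eta^2-2T)<0$, and (b) showing that this splitting argument is the correct replacement for the flawed inequality in \cite{Tarib}.
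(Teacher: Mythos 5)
Your proposal is correct and, for the operator setup, the cone, the superlinear case, and the lower bound in the sublinear case, it coincides with the paper's proof: same operator $A$ built from Lemma \ref{lem 2.1}, same cone $K$ via Lemmas \ref{lem 2.2} and \ref{lem 2.4}, same evaluation of $Au$ at an interior point (the paper uses $t^{\ast}=\eta$ and shows the discarded terms are nonnegative, which is the computation you defer to ``extracting the constant $c$''). The one place you genuinely diverge is the final step of the sublinear case, i.e.\ the $f_{\infty}=0$ upper bound --- precisely the step the authors single out as flawed in \cite{Tarib}. You use the affine majorization $f(u)\leq\varepsilon u+M^{\ast}$ with $\varepsilon C\leq 1/2$ and then take $H_2\geq 2C'M^{\ast}$, whereas the paper splits into two cases: if $f$ is bounded by $N$ it takes $H_4\geq N/\lambda$, and if $f$ is unbounded it picks $H_4$ so large that $f(u)\leq f(H_4)\leq\lambda H_4$ on $[0,H_4]$. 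Both arguments are valid. Your splitting is the more uniform one: it avoids the case distinction entirely and, in particular, avoids the mildly delicate fact (needed in the unbounded case) that a continuous unbounded $f$ admits arbitrarily large $H_4$ with $\max_{[0,H_4]}f=f(H_4)$. The paper's version, in exchange, keeps the final estimate in the same purely multiplicative form $\|Au\|\leq\lambda C H_4\leq H_4$ as in the superlinear case and introduces no additional constant $C'$. One small ordering issue in your write-up of the superlinear lower bound: choose the growth constant $\rho$ (your $M$) first from the inequality $\rho\gamma c\geq 1$, then obtain $\widehat H_2$ with $f(u)\geq\rho u$ for $u\geq\widehat H_2$, and only then set $H_2=\max\{2H_1,\widehat H_2/\gamma\}$; as phrased, your choice of $M$ refers to $H_2$ before $H_2$ is defined.
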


\begin{proof}
It is known that $0<\alpha <\frac{2T}{\eta ^{2}}$, $\ 0\leq\beta <%
\frac{2T-\alpha \eta ^{2}}{\alpha \eta ^{2}-2\eta +2T}$. From Lemma
\ref{lem 2.1}, $u$ is a solution to the boundary value problem
\eqref{eq-4}-\eqref{eq-5} if and only if $u$ is a fixed point of
operator $A$, where  $A$ is defined by
\begin{eqnarray*}
Au(t)&=&\frac{\beta (2T-\alpha \eta ^{2})-2\beta (1-\alpha \eta
)t}{(\alpha \eta ^{2}-2T)-\beta (2\eta -\alpha \eta
^{2}-2T)}\int_{0}^{\eta }(\eta -s)a(s)f(u(s))ds \\
&&+\frac{\alpha \beta \eta -\alpha (\beta -1)t}{(\alpha \eta
^{2}-2T)-\beta (2\eta -\alpha \eta
^{2}-2T)}\int_{0}^{\eta }(\eta -s)^{2}a(s)f(u(s))ds \\
&&+\frac{2(\beta-1)t-2\beta \eta }{(\alpha \eta ^{2}-2T)-\beta
(2\eta -\alpha \eta ^{2}-2T)}\int_{0}^{T}(T-s)a(s)f(u(s))ds \\
&&-\int_{0}^{t}(t-s)a(s)f(u(s))ds.
\end{eqnarray*}
Denote
\begin{equation*}\label{eq-28}
K=\left\{u / u\in C([0,T],\mathbb{R}), u\geq0, \min_{t\in
[\eta,T]}u(t)\geq \gamma \|u\|\right\},
\end{equation*}
where $\gamma$ is defined in \eqref{eq-18}.
It is obvious that $K$
is a cone in $C([0,T],\mathbb{R})$. Moreover, from Lemma \ref{lem 2.2} and Lemma \ref{lem 2.4}, $AK\subset
K$. It is also easy to check that $A:K\rightarrow K$ is completely
continuous.

Superlinear case. $f_{0}=0$ and $f_{\infty}=\infty$.
Since $f_{0}=0$, we may choose $H_{1}>0$ so that $f(u)\leq \epsilon u$,
for $0<u\leq H_{1}$, where $\epsilon>0$ satisfies
\begin{equation*}\label{eq-29}
\epsilon
\frac{2(\beta+1)+T^{-1}\beta\eta(\alpha\eta+2)+\alpha\beta
T}{(2T-\alpha\eta^{2})-\beta(\alpha\eta^{2}-2\eta+2T)}\int_{0}^{T}T(T-s)a(s)ds\leq1.
\end{equation*}
Thus, if we let
\begin{equation*}\label{eq-30}
\Omega_{1}=\left\{u\in C([0,T],\mathbb{R}): \|u\|<H_{1}\right\},
\end{equation*}
then, for $u\in K\cap \partial\Omega_{1}$, we get
\begin{eqnarray*}
Au(t)&\leq&\frac{2\beta (1-\alpha \eta
)t-\beta (2T-\alpha \eta ^{2})}{(2T-\alpha\eta^{2})-\beta(\alpha\eta^{2}-2\eta+2T)}\int_{0}^{\eta }(\eta -s)a(s)f(u(s))ds \\
&&+\frac{\alpha (\beta -1)t-\alpha \beta \eta}{(2T-\alpha\eta^{2})-\beta(\alpha\eta^{2}-2\eta+2T)}\int_{0}^{\eta }(\eta -s)^{2}a(s)f(u(s))ds \\
&&+\frac{2\beta \eta-2(\beta-1)t }{(2T-\alpha\eta^{2})-\beta(\alpha\eta^{2}-2\eta+2T)}\int_{0}^{T}(T-s)a(s)f(u(s))ds \\
&\leq&\frac{2\beta T+\alpha\beta \eta ^{2}}{(2T-\alpha\eta^{2})-\beta(\alpha\eta^{2}-2\eta+2T)}\int_{0}^{\eta }(\eta -s)a(s)f(u(s))ds \\
&&+\frac{\alpha\beta T}{(2T-\alpha\eta^{2})-\beta(\alpha\eta^{2}-2\eta+2T)}\int_{0}^{\eta }(\eta -s)^{2}a(s)f(u(s))ds \\
&&+\frac{2\beta \eta+2T}{(2T-\alpha\eta^{2})-\beta(\alpha\eta^{2}-2\eta+2T)}\int_{0}^{T}(T-s)a(s)f(u(s))ds \\
&\leq&\frac{2T(\beta+1)+\beta\eta(\alpha\eta+2)}{(2T-\alpha\eta^{2})-\beta(\alpha\eta^{2}-2\eta+2T)}\int_{0}^{T}(T-s)a(s)f(u(s))ds \\
&&+\frac{\alpha\beta
T}{(2T-\alpha\eta^{2})-\beta(\alpha\eta^{2}-2\eta+2T)}\int_{0}^{\eta
}(\eta -s)^{2}a(s)f(u(s))ds\\
&\leq&\frac{2T(\beta+1)+\beta\eta(\alpha\eta+2)}{(2T-\alpha\eta^{2})-\beta(\alpha\eta^{2}-2\eta+2T)}\int_{0}^{T}(T-s)a(s)f(u(s))ds \\
&&+\frac{\alpha\beta T}{(2T-\alpha\eta^{2})-\beta(\alpha\eta^{2}-2\eta+2T)}\int_{0}^{T}T(T-s)a(s)f(u(s))ds \\
&=&\frac{2(\beta+1)+T^{-1}\beta\eta(\alpha\eta+2)+\alpha\beta
T}{(2T-\alpha\eta^{2})-\beta(\alpha\eta^{2}-2\eta+2T)}\int_{0}^{T}T(T-s)a(s)f(u(s))ds\\
&\leq&\epsilon \|u\|
\frac{2(\beta+1)+T^{-1}\beta\eta(\alpha\eta+2)+\alpha\beta
T}{(2T-\alpha\eta^{2})-\beta(\alpha\eta^{2}-2\eta+2T)}\int_{0}^{T}T(T-s)a(s)ds\\
&\leq&\|u\|.
\end{eqnarray*}
Thus $\|Au\|\leq\|u\|$, $u\in K\cap\partial\Omega_{1}$.

Further, since $f_{\infty}=\infty$, there exists $\widehat{H}_{2}>0$
such that $f(u)\geq\rho u$ for $u\geq \widehat{H}_{2}$, where
$\rho>0$ is chosen so that
\begin{equation*}\label{eq-31}
\rho \gamma
\frac{2\eta}{(2T-\alpha\eta^{2})-\beta(\alpha\eta^{2}-2\eta+2T)}\int_{\eta}^{T}(T-s)a(s)ds\geq1.
\end{equation*}
Let $H_{2}=\max\{2H_{1}, \frac{\widehat{H}_{2}}{\gamma}\}$ and
$\Omega_{2}=\left\{u\in C([0,T],\mathbb{R}): \|u\|<H_{2}\right\}$.
Then $u\in K\cap\partial\Omega_{2}$ implies that
\begin{equation*}\label{eq-32}
\min_{t\in [\eta,T]}u(t)\geq \gamma \|u\|=\gamma H_{2}\geq
\widehat{H}_{2},
\end{equation*}
and so,
\begin{eqnarray*}
Au(\eta)&=&\frac{2\beta (1-\alpha \eta
)\eta-\beta (2T-\alpha \eta ^{2})}{(2T-\alpha\eta^{2})-\beta(\alpha\eta^{2}-2\eta+2T)}\int_{0}^{\eta }(\eta -s)a(s)f(u(s))ds \\
&&+\frac{\alpha (\beta -1)\eta-\alpha \beta \eta}{(2T-\alpha\eta^{2})-\beta(\alpha\eta^{2}-2\eta+2T)}\int_{0}^{\eta }(\eta -s)^{2}a(s)f(u(s))ds \\
&&+\frac{2\beta \eta-2(\beta-1)\eta }{(2T-\alpha\eta^{2})-\beta(\alpha\eta^{2}-2\eta+2T)}\int_{0}^{T}(T-s)a(s)f(u(s))ds \\
&&-\int_{0}^{\eta}(\eta-s)a(s)f(u(s))ds\\
&=&\frac{2\eta}{(2T-\alpha\eta^{2})-\beta(\alpha\eta^{2}-2\eta+2T)}\int_{0}^{T}(T-s)a(s)f(u(s))ds \\
&&-\frac{\alpha \eta}{(2T-\alpha\eta^{2})-\beta(\alpha\eta^{2}-2\eta+2T)}\int_{0}^{\eta }(\eta^{2}-2\eta s+s^{2})a(s)f(u(s))ds \\
&&-\frac{2T-\alpha
\eta^{2}}{(2T-\alpha\eta^{2})-\beta(\alpha\eta^{2}-2\eta+2T)}\int_{0}^{\eta}(\eta
-s)a(s)f(u(s))ds\\
&=&\frac{2\eta}{(2T-\alpha\eta^{2})-\beta(\alpha\eta^{2}-2\eta+2T)}\int_{0}^{T}(T-s)a(s)f(u(s))ds \\
&&+\frac{\alpha\eta^{2}}{(2T-\alpha\eta^{2})-\beta(\alpha\eta^{2}-2\eta+2T)}\int_{0}^{\eta}sa(s)f(u(s))ds \\
&&+\frac{2T}{(2T-\alpha\eta^{2})-\beta(\alpha\eta^{2}-2\eta+2T)}\int_{0}^{\eta}sa(s)f(u(s))ds \\
&&-\frac{\alpha\eta}{(2T-\alpha\eta^{2})-\beta(\alpha\eta^{2}-2\eta+2T)}\int_{0}^{\eta}s^{2}a(s)f(u(s))ds \\
&&-\frac{2T\eta}{(2T-\alpha\eta^{2})-\beta(\alpha\eta^{2}-2\eta+2T)}\int_{0}^{\eta}a(s)f(u(s))ds\\
&=&\frac{2\eta}{(2T-\alpha\eta^{2})-\beta(\alpha\eta^{2}-2\eta+2T)}\int_{\eta}^{T}(T-s)a(s)f(u(s))ds \\
&&+\frac{2(T-\eta)}{(2T-\alpha\eta^{2})-\beta(\alpha\eta^{2}-2\eta+2T)}\int_{0}^{\eta}sa(s)f(u(s))ds \\
&&+\frac{\alpha\eta}{(2T-\alpha\eta^{2})-\beta(\alpha\eta^{2}-2\eta+2T)}\int_{0}^{\eta}s(\eta-s)a(s)f(u(s))ds\\
&\geq&\frac{2\eta}{(2T-\alpha\eta^{2})-\beta(\alpha\eta^{2}-2\eta+2T)}\int_{\eta}^{T}(T-s)a(s)f(u(s))ds \\
&\geq&\frac{2\eta\rho}{(2T-\alpha\eta^{2})-\beta(\alpha\eta^{2}-2\eta+2T)}\int_{\eta}^{T}(T-s)a(s)u(s)ds \\
&\geq&\frac{2\eta\rho\gamma \|u\|}{(2T-\alpha\eta^{2})-\beta(\alpha\eta^{2}-2\eta+2T)}\int_{\eta}^{T}(T-s)a(s)ds \\
&\geq&\|u\|.
\end{eqnarray*}
Hence, $\|Au\|\geq\|u\|$, $u\in K\cap\partial\Omega_{2}$. By the
first part of Theorem \ref{theo 1.1}, $A$ has a fixed point in
$K\cap (\overline{\Omega}_{2}\backslash \Omega _{1})$ such that
$H_{1}\leq\|u\|\leq H_{2}$. This completes the superlinear part of
the theorem.

Sublinear case. $f_{0}=\infty$ and $f_{\infty}=0$. Since
$f_{0}=\infty$, choose $H_{3}>0$ such that $f(u)\geq Mu$ for
$0<u\leq H_{3}$, where $M>0$ satisfies
\begin{equation*}\label{eq-33}
M\gamma\frac{2\eta}{(2T-\alpha\eta^{2})-\beta(\alpha\eta^{2}-2\eta+2T)}\int_{\eta}^{T}(T-s)a(s)ds\geq1.
\end{equation*}
Let $\Omega_{3}=\left\{u\in C([0,T],\mathbb{R}):
\|u\|<H_{3}\right\}$, then for $u\in K\cap\partial\Omega_{3}$, we
get
\begin{eqnarray*}
Au(\eta)&=&\frac{2\eta}{(2T-\alpha\eta^{2})-\beta(\alpha\eta^{2}-2\eta+2T)}\int_{0}^{T}(T-s)a(s)f(u(s))ds \\
&&-\frac{\alpha \eta}{(2T-\alpha\eta^{2})-\beta(\alpha\eta^{2}-2\eta+2T)}\int_{0}^{\eta }(\eta-s)^{2}a(s)f(u(s))ds \\
&&-\frac{2T-\alpha
\eta^{2}}{(2T-\alpha\eta^{2})-\beta(\alpha\eta^{2}-2\eta+2T)}\int_{0}^{\eta}(\eta
-s)a(s)f(u(s))ds\\
&\geq&\frac{2\eta}{(2T-\alpha\eta^{2})-\beta(\alpha\eta^{2}-2\eta+2T)}\int_{\eta}^{T}(T-s)a(s)f(u(s))ds\\
&\geq&\frac{2\eta M}{(2T-\alpha\eta^{2})-\beta(\alpha\eta^{2}-2\eta+2T)}\int_{\eta}^{T}(T-s)a(s)u(s)ds\\
&\geq&M\gamma\frac{2\eta
\|u\|}{(2T-\alpha\eta^{2})-\beta(\alpha\eta^{2}-2\eta+2T)}\int_{\eta}^{T}(T-s)a(s)ds\\
&\geq&\|u\|.
\end{eqnarray*}
Thus $\|Au\|\geq\|u\|$, $u\in K\cap\partial\Omega_{3}$.
Now, since $f_{\infty}=0$, there exists $\widehat{H}_{4}>0$ so that $f(u)\leq
\lambda u$ for $u\geq \widehat{H}_{4}$, where $\lambda>0$ satisfies
\begin{equation*}\label{eq-34}
\lambda
\frac{2(\beta+1)+T^{-1}\beta\eta(\alpha\eta+2)+\alpha\beta
T}{(2T-\alpha\eta^{2})-\beta(\alpha\eta^{2}-2\eta+2T)}\int_{0}^{T}T(T-s)a(s)ds\leq1.
\end{equation*}
We consider two cases:

Case (i). Suppose $f$ is bounded, say
$f(u)\leq N$ for all $u\in[0,\infty)$. Choosing
$H_{4}\geq\max\{2H_{3},\frac{N}{\lambda}\}$.
For $u\in K$ with $\|u\|=H_{4}$, we have
\begin{eqnarray*}
Au(t)&=&\frac{2\beta (1-\alpha \eta
)t-\beta (2T-\alpha \eta ^{2})}{(2T-\alpha\eta^{2})-\beta(\alpha\eta^{2}-2\eta+2T)}\int_{0}^{\eta }(\eta -s)a(s)f(u(s))ds \\
&&+\frac{\alpha (\beta -1)t-\alpha \beta \eta}{(2T-\alpha\eta^{2})-\beta(\alpha\eta^{2}-2\eta+2T)}\int_{0}^{\eta }(\eta -s)^{2}a(s)f(u(s))ds \\
&&+\frac{2\beta \eta-2(\beta-1)t }{(2T-\alpha\eta^{2})-\beta(\alpha\eta^{2}-2\eta+2T)}\int_{0}^{T}(T-s)a(s)f(u(s))ds \\
&&-\int_{0}^{t}(t-s)a(s)f(u(s))ds\\
&\leq&
\frac{2(\beta+1)+T^{-1}\beta\eta(\alpha\eta+2)+\alpha\beta
T}{(2T-\alpha\eta^{2})-\beta(\alpha\eta^{2}-2\eta+2T)}\int_{0}^{T}T(T-s)a(s)f(u(s))ds\\
&\leq& N
\frac{2(\beta+1)+T^{-1}\beta\eta(\alpha\eta+2)+\alpha\beta
T}{(2T-\alpha\eta^{2})-\beta(\alpha\eta^{2}-2\eta+2T)}\int_{0}^{T}T(T-s)a(s)ds\\
&\leq& H_{4}\lambda
\frac{2(\beta+1)+T^{-1}\beta\eta(\alpha\eta+2)+\alpha\beta
T}{(2T-\alpha\eta^{2})-\beta(\alpha\eta^{2}-2\eta+2T)}\int_{0}^{T}T(T-s)a(s)ds\\
&\leq& H_{4},
\end{eqnarray*}
and therefore $\|Au\|\leq\|u\|$.
\end{proof}

Case (ii). If $f$ is unbounded, then we know from $f\in
C([0,\infty),[0,\infty))$ that there is $H_{4}$:
$H_{4}\geq\max\{2H_{3},\frac{\widehat{H}_{4}}{\gamma}\}$ such that
\begin{equation*}\label{eq-35}
f(u)\leq f(H_{4})\ \text{for}\ u\in[0,H_{4}].
\end{equation*}
Then for $u\in K$ and $\|u\|=H_{4}$, we have
\begin{eqnarray*}
Au(t)&\leq&
\frac{2(\beta+1)+T^{-1}\beta\eta(\alpha\eta+2)+\alpha\beta
T}{(2T-\alpha\eta^{2})-\beta(\alpha\eta^{2}-2\eta+2T)}\int_{0}^{T}T(T-s)a(s)f(u(s))ds\\
&\leq& \frac{2(\beta+1)+T^{-1}\beta\eta(\alpha\eta+2)+\alpha\beta
T}{(2T-\alpha\eta^{2})-\beta(\alpha\eta^{2}-2\eta+2T)}\int_{0}^{T}T(T-s)a(s)f(H_{4}))ds\\
&\leq& H_{4} \lambda
\frac{2(\beta+1)+T^{-1}\beta\eta(\alpha\eta+2)+\alpha\beta
T}{(2T-\alpha\eta^{2})-\beta(\alpha\eta^{2}-2\eta+2T)}\int_{0}^{T}T(T-s)a(s)ds\\
&\leq& H_{4}=\|u\|.
\end{eqnarray*}
Therefore, in either case we may set
\begin{equation*}\label{eq-36}
\Omega_{4}=\left\{u\in C([0,T],\mathbb{R}): \|u\|<H_{4}\right\},
\end{equation*}
and for $u\in K\cap\partial\Omega_{4}$ we may have $\|Au\|\leq
\|u\|$. By the second part of Theorem \ref{theo 1.1}, it follows
that $A$ has a fixed point in $K\cap
(\overline{\Omega}_{4}\backslash \Omega _{3})$ such that
$H_{3}\leq\|u\|\leq H_{4}$. This completes the sublinear part of
the theorem. Therefore, the problem \eqref{eq-4}-\eqref{eq-5} has
at least one positive solution.

\section{Some examples}
In this section, in order to illustrate our result, we consider some examples.

\begin{exmp}
Consider the boundary value problem

\begin{equation}\label{eq-37}
{u^{\prime \prime }}(t)+tu^{p}=0, \  \ 0<t<2,
\end{equation}

\begin{equation}\label{eq-38}
u(0)=\frac{1}{2}u(\frac{3}{2}), \  \ u(2)= \int_{0}^{\frac{3}{2}}u(s)ds.
\end{equation}

Set $\beta=1/2$, $\alpha=1$, $\eta=3/2$, $T=2$, $a(t)=t$, $f(u)=u^{p}$. We can show that $0<\alpha=1<16/9=2T/{\eta^{2}}$, $0<\beta=1/2<7/13=(2T-\alpha \eta ^{2})/(\alpha \eta ^{2}-2\eta
+2T)$.\\
Now we consider the existence of positive solutions of the problem \eqref{eq-37}, \eqref{eq-38} in two
cases.\\
Case 1: $p>1$. In this case, $f_{0}=0$, $f_{\infty}=\infty$ and (i) holds. Then \eqref{eq-37}, \eqref{eq-38} has at least one positive solution.\\
Case 2: $p\in (0, 1)$. In this case, $f_{0}=\infty$, $f_{\infty}=0$ and (ii) holds. Then (\eqref{eq-37}, \eqref{eq-38} has at least one positive solution.
\end{exmp}

\begin{exmp}
Consider the boundary value problem

\begin{equation}\label{eq-39}
{u^{\prime \prime }}(t)+t^{2}u^{2}\ln{(1+u)}=0, \  \ 0<t<\frac{3}{4},
\end{equation}

\begin{equation}\label{eq-40}
u(0)=\frac{1}{10}u(\frac{1}{4}), \  \ u(\frac{3}{4})= 20\int_{0}^{\frac{1}{4}}u(s)ds.
\end{equation}

Set $\beta=1/10$, $\alpha=20$, $\eta=1/4$, $T=3/4$, $a(t)=t^{2}$, $f(u)=u^{2}\ln{(1+u)}$. We can show that $0<\alpha=20<24=2T/{\eta^{2}}$, $0<\beta=1/10<1/9=(2T-\alpha \eta ^{2})/(\alpha \eta ^{2}-2\eta
+2T)$.
Through a simple calculation we can get $f_{0}=0$ and $f_{\infty}=\infty$. Thus, by the
first part of Theorem \ref{theo 3.1}, we can get that the problem \eqref{eq-39}, \eqref{eq-40}) has at least one positive solution.
\end{exmp}

\begin{exmp}
Consider the boundary value problem

\begin{equation}\label{eq-41}
{u^{\prime \prime }}(t)+e^{t}\frac{\sin{u}+\ln{(1+u)}}{u^{2}}=0, \  \ 0<t<1,
\end{equation}

\begin{equation}\label{eq-42}
u(0)=u(\frac{1}{3}), \  \ u(1)= 2\int_{0}^{\frac{1}{3}}u(s)ds.
\end{equation}

Set $\beta=1$, $\alpha=2$, $\eta=1/3$, $T=1$, $a(t)=e^{t}$, $f(u)=(\sin{u}+\ln{(1+u)})/{u^{2}}$. We can show that $0<\alpha=2<18=2T/{\eta^{2}}$, $0<\beta=1<8/7=(2T-\alpha \eta ^{2})/(\alpha \eta ^{2}-2\eta
+2T)$.
Through a simple calculation we can get $f_{0}=\infty$ and $f_{\infty}=0$. Thus, by the
second part of Theorem \ref{theo 3.1}, we can get that the problem \eqref{eq-41}, \eqref{eq-42}) has at least one positive solution.
\end{exmp}

\end{document}